\newtheorem{thm}{Theorem}[section]
\newtheorem{prop}[thm]{Proposition}
\newtheorem{cor}[thm]{Corollary}
\newtheorem{lemma}[thm]{Lemma}
\newtheorem*{thm*}{Theorem}						
\newtheorem*{prop*}{Proposition}
\newtheorem*{lemma*}{Lemma}
\newtheorem*{cor*}{Corollary}
\newtheorem*{conj*}{Conjecture}
\theoremstyle{definition}
\newtheorem{definition}[thm]{Definition}
\newtheorem{notation}[thm]{Notation}
\newcommand{\Z}{\mathbb{Z}}
\newcommand{\Q}{\mathbb{Q}}
\newcommand{\R}{\mathbb{R}}
\newcommand{\C}{\mathbb{C}}
\renewcommand{\vec}[1]{\mathbf{#1}}
\renewcommand{\P}{\mathbb{P}}
\DeclareMathOperator{\aut}{Aut}
\DeclareMathOperator{\eff}{Eff}
\DeclareMathOperator{\nef}{Nef}
\DeclareMathOperator{\pic}{Pic}
\let\O\relax
\DeclareMathOperator{\O}{\mathcal{O}}
\DeclareMathOperator{\ch}{ch}
\DeclareMathOperator{\rk}{rk}
\DeclareMathOperator{\moricone}{NE}
\DeclareMathOperator{\coh}{Coh}
\numberwithin{equation}{thm}
\let\c@theorem\c@figure
\begin{document}

\title[The nef cone of the Hilbert scheme of points on rational elliptic surfaces]{The nef cone of the Hilbert scheme of points on rational elliptic surfaces and the cone conjecture}

\author[J. Kopper]{John Kopper}
\address{Department of Mathematics, Statistics and CS \\ University of Illinois at Chicago, Chicago, IL, 60607}
\email{jkoppe2@uic.edu}

\thanks{During the preparation of this article the author was partially supported by NSF RTG grant DMS-1246844.}

\subjclass[2010]{Primary: 14C05. Secondary: 14E30, 14J27, 14J50}
\keywords{Hilbert schemes of points, elliptic surfaces, cone conjecture, klt Calabi-Yau pairs}

\begin{abstract}
We compute the nef cone of the Hilbert scheme of points on a general rational elliptic surface. As a consequence of our computation, we show that the Morrison-Kawamata cone conjecture holds for these nef cones.
\end{abstract}

\maketitle

\section{Introduction}
The Morrison-Kawamata cone conjecture gives a description of the nef cone of Calabi-Yau varieties in terms of the automorphism group of the variety. A more general conjecture was formulated for klt Calabi-Yau pairs in \cite{totaro-conj} and proven in \cite{totaro} for 2-dimensional varieties. The conjecture states that there is a rational polyhedral fundamental domain for the action of the automorphism group on the nef cone. In this paper we prove that the Morrison-Kawamata cone conjecture holds for Hilbert schemes of points on rational elliptic surfaces. This fact was first shown for the underlying rational elliptic surface by Grassi-Morrison \cite{grassi-morrison}. This work is based on part of the author's Ph.D. thesis \cite{kopper-thesis}.

The heart of this paper is in giving a precise compuation of the nef cone of Hilbert schemes of points on rational elliptic surfaces. Such surfaces behave similarly to del Pezzo surfaces, for which the nef cones of Hilbert schemes have already been described \cite{bertram-coskun} \cite{7-author}. The argument in \cite{7-author} uses Bridgeland stability techniques and the Bayer-Macr\`i positivity lemma \cite{bayer-macri}. We adopt this approach and will use many of techniques developed in \cite{7-author}.

By a \emph{rational elliptic surface} we mean a smooth, rational, complex projective surface $X$ admitting a map $X \to \P^1$ whose general fiber is a smooth elliptic curve. The general such surface may be obtained as the blow-up of $\P^2$ at the nine base points of a general cubic pencil.

For any smooth surface $X$, we will denote by $X^{[n]}$ the \emph{Hilbert scheme of $n$ points on $X$} parameterizing length-$n$ subschemes of $X$. By Fogarty's theorem \cite{fogarty-smooth}, $X^{[n]}$ is smooth and irreducible of dimension $2n$. We describe $\nef(X^{[n]})$ dually by describing the cone of curves $\moricone(X^{[n]})$.

The natural map
\[
X^{[n]} \to X^{(n)}
\]
called the \emph{Hilbert-Chow morphism} is a resolution of singularities. Let $C_0$ denote the class of a curve contracted by the Hilbert-Chow morphism. Any curve $C \subset X$ admitting a $g^1_n$ induces a rational curve $C_{[n]} \subset X^{[n]}$ whose points parameterize the fibers of the corresponding map $C \to \P^1$. Our main theorem can then be stated as follows.

\begin{thm}\label{thm:main_thm}
Let $X$ be a general rational elliptic surface and $n \geq 3$ an integer. Then the cone of curves $\moricone(X^{[n]})$ is spanned by the classes $E_{[n]}$ for all $(-1)$-curves $E \subset X$, the curve $F_{[n]}$, where $F$ is the class of an elliptic fiber, and the class $C_0$.
\end{thm}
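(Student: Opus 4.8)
The plan is to prove the two inclusions $\Sigma \subseteq \moricone(X^{[n]})$ and $\moricone(X^{[n]}) \subseteq \Sigma$, where $\Sigma$ denotes the cone spanned by the classes in the statement. I first fix coordinates on the relevant Néron--Severi groups. By Fogarty's theorem and its refinements one has $N^1(X^{[n]}) \cong N^1(X) \oplus \Z B$, where $2B$ is the class of the exceptional divisor of the Hilbert--Chow morphism; dually $N_1(X^{[n]}) \cong N_1(X) \oplus \Z C_0$. For a divisor class $H$ on $X$ write $H_n$ for the induced class on $X^{[n]}$, and for a curve $C \subset X$ carrying a $g^1_n$ write $C_{[n]}$ for the induced curve. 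The intersection pairing between the $H_n$ and the classes $C_{[n]}, C_0$ can be read off directly from the geometry of the Hilbert--Chow morphism. Since $X$ is a blow-up of $\P^2$ at nine points, $N^1(X)$ has rank $10$, so the theorem becomes an identity of cones in an $11$-dimensional space.

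The inclusion $\Sigma \subseteq \moricone(X^{[n]})$ is the geometric direction and is the easier one. The class $C_0$ is effective as a generic fiber of the Hilbert--Chow morphism. Each $(-1)$-curve $E \cong \P^1$ carries a degree-$n$ map to $\P^1$, hence a $g^1_n$, producing the rational curve $E_{[n]}$; and the elliptic fiber $F$, being a smooth genus-one curve, carries a $g^1_n$ for every $n \geq 2$, producing $F_{[n]}$. All of these are represented by honest curves and therefore lie in $\moricone(X^{[n]})$, so passing to the convex cone gives $\Sigma \subseteq \moricone(X^{[n]})$. In particular, dualizing, $\nef(X^{[n]}) \subseteq \Sigma^\vee$.

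The reverse inclusion $\moricone(X^{[n]}) \subseteq \Sigma$ is equivalent to $\Sigma^\vee \subseteq \nef(X^{[n]})$, and this is where Bridgeland stability enters. I realize $X^{[n]}$ as the moduli space of $\sigma$-stable objects of Chern character $(1,0,-n)$, i.e.\ ideal sheaves of length-$n$ subschemes, for $\sigma$ in the large-volume (Gieseker) chamber, and invoke the Bayer--Macr\`i positivity lemma: each stability condition $\sigma$ determines a nef class $\ell_\sigma$ on $X^{[n]}$, and as $\sigma$ ranges over the closure of this geometric chamber the classes $\ell_\sigma$ trace out a subcone $N \subseteq \nef(X^{[n]})$. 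Working with stability conditions twisted by real divisor classes on $X$, so that the parameter space attains the full rank of $N^1(X^{[n]})$, the goal is to show that $N$ exhausts $\Sigma^\vee$. Combined with the previous paragraph this yields the chain $\Sigma^\vee \subseteq N \subseteq \nef(X^{[n]}) \subseteq \Sigma^\vee$, which collapses to equalities and finishes the proof.

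The crux, and the main obstacle, is the precise determination of the walls of the geometric chamber, since $N$ exhausts $\Sigma^\vee$ exactly when its bounding walls match the extremal rays of $\Sigma$. There are three types of destabilizing phenomena, and the argument must account for all of them while excluding spurious ones. The Hilbert--Chow morphism supplies the boundary of the chamber dual to $C_0$. The $(-1)$-curve walls come from the destabilizing subobjects and quotients supported on each $(-1)$-curve $E$; since a general rational elliptic surface has Mordell--Weil rank $8$, there are infinitely many such curves and hence infinitely many walls, so I must control how these accumulate and confirm that each genuinely bounds the geometric chamber. The genuinely new feature, absent in the del Pezzo analysis of \cite{7-author}, is the elliptic-fiber wall arising from the fibration $X \to \P^1$, equivalently from $-K_X = F$; locating this wall, verifying that the class $\ell_\sigma$ along it is dual to $F_{[n]}$, and checking that it defines a true extremal ray rather than being cut off by the $(-1)$-curve walls is the most delicate step. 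Once the wall-and-chamber structure is shown to consist of exactly these walls, each wall produces a nef class $\ell_\sigma$ together with a curve in $\{E_{[n]}, F_{[n]}, C_0\}$ on which it vanishes, the Bayer--Macr\`i classes fill out $\Sigma^\vee$, and we conclude $\moricone(X^{[n]}) = \Sigma$.
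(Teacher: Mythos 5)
Your overall strategy is the same as the paper's: the easy inclusion by exhibiting $C_0$, $E_{[n]}$, $F_{[n]}$ as honest curves, and the hard inclusion by realizing the extremal nef classes as Bayer--Macr\`i divisors attached to stability conditions on the boundary of the Gieseker chamber. But the proposal stops exactly where the proof has to begin: you correctly identify the determination of the destabilizing walls as ``the crux, and the main obstacle,'' and then do not carry it out. That computation is essentially the entire content of the theorem, so as written this is an outline with a genuine gap rather than a proof. Concretely, two ingredients are missing. First, a finiteness mechanism: $\Sigma^\vee$ has infinitely many extremal rays (one for each $(-1)$-curve), and you acknowledge the accumulation problem without resolving it. The paper resolves it by first showing $\Sigma^\vee$ is contained in the cone spanned by $\nef(X)$ and $(n-1)F^{[n]}-\tfrac{1}{2}B$, and then using the Weyl group action (which has exactly three orbits on the extremal rays of $\nef(X)$, represented by $F$, $H$, $H-E_1$) to reduce the nefness check to the two classes $\varepsilon(H)$ and $\varepsilon(H-E_1)$. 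Without some such symmetry reduction, your plan of showing that the Bayer--Macr\`i cone ``exhausts $\Sigma^\vee$'' requires controlling infinitely many walls at once, and you give no argument for that.

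Second, the actual wall computation. The paper does not need the full wall-and-chamber structure in a high-rank space of twisted stability conditions; it fixes the twist $P=-F$ and two carefully chosen polarizations $A_1=\tfrac{n}{3}H+(n-\tfrac{3}{2})F$ and $A_2=\tfrac{n}{2}(H-E_1)+(n-\tfrac{3}{2})F$, chosen precisely so that the Gieseker wall in each slice is $W(\O_X(-F),I_Z)$ with center $s_W=-1$ and so that the resulting nef divisor $\tfrac{1}{2}K_X^{[n]}-s_WA_i^{[n]}+F^{[n]}-\tfrac{1}{2}B$ lands exactly on $\varepsilon(H)$, respectively $\varepsilon(H-E_1)$. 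Proving that $W(\O_X(-F),I_Z)$ really is the largest wall requires a case analysis: for rank-one destabilizers one reduces to antieffective line bundles $L$ with $-L\cdot A_i<n$ and enumerates the possibilities ($-L=E_i$, $H-E_i-E_j$, etc.), checking each gives a smaller wall; for rank $\geq 2$ one invokes the radius bound of \cite{7-author} and the hypothesis $n\geq 3$. None of this appears in your sketch, and it is where the hypothesis $n\geq 3$ and the specific geometry of the rational elliptic surface (e.g., that the fiber class is anticanonical and meets every $(-1)$-curve once) actually get used. Until these computations are supplied, the chain $\Sigma^\vee\subseteq N\subseteq\nef(X^{[n]})\subseteq\Sigma^\vee$ is a statement of intent, not an argument.
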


There are infinitely many $(-1)$-curves on $X$, but they fall into single orbit of the action of the Weyl group \cite{looijenga}. Furthermore, there is a rational polyhedral fundamental domain for the action of $\aut(X)$ on $\moricone(X)$ \cite{grassi-morrison}. Assuming this description of $\moricone(X)$, Theorem \ref{thm:main_thm} describes $\moricone(X^{[n]})$ completely. As a consequence, we show that there is a fundamental domain for the action of the automorphism group on the nef cone and deduce that the Morrison-Kawamata cone conjecture also holds for $\nef(X^{[n]})$ (Corollary \ref{cor:cone_conj}).

\subsection*{Acknowledgments} The author is grateful to Izzet Coskun and Tim Ryan for their many helpful conversations.

\section{Preliminaries} We will work over the field of complex numbers. In this section we collect a number of facts required for our calculation of the nef cone. Let $X$ be a smooth projective surface with $q(X) = 0$. Then by \cite{fogarty} we have $\rho(X^{[n]}) = \rho(X) + 1$ and the Picard group of $X^{[n]}$ can be described as follows. Given a divisor $D$ on $X$, define the divisor $D^{[n]}$ on $X^{[n]}$ by
\[
D^{[n]} = \left\{Z \in X^{[n]}: D \cap Z \neq \varnothing \right\}.
\]
Let $h:X^{[n]} \to X^{(n)} = (X \times X \times \cdots \times X)/\mathfrak{S}_n$ denote the Hilbert-Chow morphism sending a length $n$ scheme to its support. The exceptional locus of $h$ is a divisor which we denote $B$ and it parameterizes nonreduced schemes. Then we have
\[
\pic(X^{[n]}) \cong \pic(X) \oplus \Z \frac{B}{2},
\]
where $\pic(X)$ embeds into $\pic(X^{[n]})$ by $D \mapsto D^{[n]}$.

We introduce a few curve classes on $X^{[n]}$. We denote by $C_0$ the class of a curve contracted by $h$. If $C \subset X$ is a curve that admits a $g^1_n$, i.e., a degree $n$ map to $\P^1$, then the fibers of this map induce a rational curve $C_{[n]}$ on $X^{[n]}$. The following intersection numbers are standard:

\begin{center}
\bgroup
\def\arraystretch{1.5}
\begin{tabular}{c|cc}
& $D^{[n]}$ & $B$\\
\hline
$C_{[n]}$ & $C \cdot D$ & $2g(C) - 2 + 2n$\\
$C_0$ & $0$ & $-2$
\end{tabular}
\egroup
\end{center}

Note that any divisor on $X^{[n]}$ that has nonnegative intersection with $C_0$ and $C_{[n]}$ for every effective curve $C$ must have nonpositive coefficient in $B$.

\subsection{Background on Bridgeland stability}
The technical core of this paper uses Bridgeland stability conditions for derived categories. We will not need the full strength of Bridgeland's results, so we refer the reader to Bridgeland's papers \cite{bridgeland-1}\cite{bridgeland-2} for more details. We also suggest Huizenga's survey \cite{huizenga-survey} for more background on Bridgeland stability conditions and their relationship to moduli spaces of sheaves.

Let $X$ be a smooth projective surface and fix a polarization $A \in \pic(X) \otimes \Q$. Let $P \in \pic(X) \otimes \Q$ be any $\Q$-divisor. The \emph{($P$-)twisted Chern character} is defined as $\ch^P = \exp(-P) \ch$.

We define the $\mu_{A,P}$-slope to be the function
\[
\mu_{A,P} = \begin{cases}
\displaystyle \frac{A \cdot \ch_1^P}{A^2 \ch_0^P} & \ch_0^P \neq 0\\[10pt]
+\infty & \ch_0^p = 0.
\end{cases}
\]

The \emph{$(A,P)$-twisted discriminant} is defined as
\[
\Delta_{A,P} = \frac{1}{2}\mu_{A,P}^2 - \frac{\ch_2^P}{A^2 \ch_0^P}.
\]

Arcara-Bertram \cite{arcara-bertram} construct a family of stability conditions called the \emph{$(A,P)$-slice}. Explicitly, let $s \in \R$ and define a category
\[
\mathcal{A}_s = \langle\mathcal{F}_s[1], \mathcal{Q}_s\rangle,
\]
where
\begin{align*}
\mathcal{F}_s &= \{F \in \coh(X) : F \text{ is torsion-free and } \mu_{A,P}(F') \leq s \text{ for all proper subsheaves }F' \text{ of } F \}\\
\mathcal{Q}_s &= \{Q \in \coh(X) : Q \text{ is torsion or } \mu_{A,P}(Q') > s \text{ for all quotients } Q' \text{ of } Q \}.
\end{align*}
For $s \in R$ and $t \in \R_{>0}$, the \emph{central charge} $Z_{s,t}$ is defined as the homomorphism $Z: K_0(X) \to \C$ given by
\[
Z(\vec{v}) = -\ch_2^{P+sA} + \frac{t^2A^2}{2}\ch_0^{P+sA}+iA\cdot \ch_1^{P+sA}.
\]
The \emph{$\mu_{s,t}$-slope} of an object $E \in \mathcal{A}_s$ is defined as the number
\[
\mu_{s,t}(E) = -\frac{\Re Z_{s,t}(E)}{\Im Z_{s,t}(E)}.
\]
Then Arcara-Bertram show in \cite{arcara-bertram} that the pair $(\mathcal{A}_s, Z_{s,t})$ is a Bridgeland stability condition. Moreover, for a fixed Chern character $\vec{v} \in K_0(X)$, the upper half-plane $\{(s,t):t>0\}$ (identified with the $(A,P)$-slice) admits a wall-and-chamber decomposition such that stable objects of Chern character $\vec{v}$ cannot destabilize unless a wall is crossed. Maciocia showed in \cite{maciocia} that these walls are either the vertical line $s=\mu_{A,P}(\vec{v})$ or nested semicircles. A semicircular wall $W_1$ is contained in a semicircular wall $W_2$ if and only if the center of $W_1$ is to the left of the center of $W_2$.

The following lemma collects some standard facts about the stability conditions in the $(A,P)$-slice.

\begin{lemma} With notation as above, we have the following.
\begin{enumerate}
\item If $E$ is a sheaf in $\mathcal{A}_s$ and $F \to E$ is a destabilizing object in $\mathcal{A}_s$, then $F$ is a sheaf.
\item For $t\gg 0$, the Bridgeland moduli space of $(\mathcal{A}_s,Z_{s,t})$-semistable objects with Chern character $\vec{v}$ equals the $(A,P)$-twisted moduli space of Gieseker semistable sheaves with Chern character $\vec{v}$.
\item Given objects $E$ and $F$ in $\mathcal{A}_s$, the numerical wall $W(E,F)$ in the $(A,P)$-slice consisting of stability conditions for which $E$ and $F$ have the same $\mu_{s,t}$-slope has center $s_0$ and radius $\rho$ satisfying
\begin{align*}
 s_0 &= \frac{1}{2}(\mu_{A,P}(E) + \mu_{A,P}(F)) - \frac{\Delta_{A,P}(E) - \Delta_{A,P}(F)}{\mu_{A,P}(E) - \mu_{A,P}(F)}\\
 \rho^2 &= (\mu_{A,P}(E) - s_0)^2 - 2\Delta_{A,P}(E).
\end{align*}
In particular, if $I_Z$ is the ideal sheaf of a length $n$ scheme and $E = L \otimes I_{Z'}$ for some line bundle $L$ and 0-dimensional scheme $Z'$ of length $m$, then the center $s_0$ of $W(E, I_Z)$ is given by
\[
s_0 = \frac{n-m+\frac{L^2}{2}-\frac{L\cdot P}{2}}{-L \cdot A}.
\]
\end{enumerate}
\end{lemma}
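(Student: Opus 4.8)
The three parts are essentially independent, so I would treat them in turn. For (1), recall that $\mathcal{A}_s$ is the heart of the bounded t-structure obtained by tilting $\coh(X)$ at the torsion pair $(\mathcal{Q}_s,\mathcal{F}_s)$, so every object $E\in\mathcal{A}_s$ has cohomology sheaves concentrated in degrees $-1$ and $0$, with $H^{-1}(E)\in\mathcal{F}_s$ and $H^0(E)\in\mathcal{Q}_s$. Writing the destabilizing inclusion as a short exact sequence $0\to F\to E\to G\to 0$ in $\mathcal{A}_s$ and passing to the long exact sequence of cohomology sheaves, the hypothesis that $E$ is a sheaf gives $H^{-1}(E)=0$; the resulting injection $H^{-1}(F)\hookrightarrow H^{-1}(E)$ forces $H^{-1}(F)=0$, so $F$ is a sheaf. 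For (2), I would argue by the large-volume limit: for two objects of the same Chern character the leading $t^2$ term of $\Re Z_{s,t}$ agrees, so as $t\to\infty$ the sign of the slope difference is governed by the subleading terms, which reproduce the ordering by $(A,P)$-twisted reduced Hilbert polynomial, i.e. twisted Gieseker stability. I would cite \cite{arcara-bertram}\cite{bridgeland-2} for the precise identification of moduli spaces; the content is exactly the asymptotic comparison just described.

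Part (3) is the computational heart. The key step is to rewrite the central charge in terms of the twisted slope and discriminant. Using $\ch^{P+sA}=\exp(-sA)\ch^{P}$ together with the definitions of $\mu_{A,P}$ and $\Delta_{A,P}$, a direct expansion gives
\[
\Im Z_{s,t}(E)=A^2\ch_0(E)\bigl(\mu_{A,P}(E)-s\bigr),\qquad \Re Z_{s,t}(E)=A^2\ch_0(E)\Bigl(\tfrac{t^2}{2}+\Delta_{A,P}(E)-\tfrac12\bigl(\mu_{A,P}(E)-s\bigr)^2\Bigr).
\]
The numerical wall $W(E,F)$ is cut out by $\Re Z_{s,t}(E)\,\Im Z_{s,t}(F)=\Re Z_{s,t}(F)\,\Im Z_{s,t}(E)$. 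Dividing through by $A^4\ch_0(E)\ch_0(F)$ and, in the case $\mu_{A,P}(E)\neq\mu_{A,P}(F)$ (the equal case giving instead the vertical wall), clearing the common factor $\mu_{A,P}(E)-\mu_{A,P}(F)$, the relation becomes $s^2+t^2-2s_0 s+c=0$ for constants $s_0,c$ depending only on the $\mu_{A,P}$ and $\Delta_{A,P}$ of $E$ and $F$. This is the equation of a circle centered at $(s_0,0)$ with $\rho^2=s_0^2-c$, from which one reads off the stated $s_0$. Matching the radius to the symmetric form $\rho^2=(\mu_{A,P}(E)-s_0)^2-2\Delta_{A,P}(E)$ amounts to the identity $(\mu_{A,P}(E)-s_0)^2-2\Delta_{A,P}(E)=(\mu_{A,P}(F)-s_0)^2-2\Delta_{A,P}(F)$, which follows directly from the formula for $s_0$.

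For the final specialization, the two identities above collapse the center, when $E$ and $I_Z$ have equal rank, to $s_0=\bigl(\ch_2^{P}(E)-\ch_2^{P}(I_Z)\bigr)/\bigl(A^2(\mu_{A,P}(E)-\mu_{A,P}(I_Z))\bigr)$. I would then substitute $\ch(I_Z)=(1,0,-n)$ and $\ch(L\otimes I_{Z'})=(1,L,\tfrac{L^2}{2}-m)$, twist by $P$, and simplify to obtain the stated value of $s_0$. The main obstacle is not any single step but the bookkeeping in (3): organizing the expansion so that the wall equation is manifestly a circle and so that the radius appears in the clean symmetric form. Re-expressing $\Re Z_{s,t}$ and $\Im Z_{s,t}$ through $\mu_{A,P}$ and $\Delta_{A,P}$ at the outset is what makes this transparent, and I would set up that reformulation first.
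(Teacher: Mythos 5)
The paper states this lemma without proof, presenting it as a collection of standard facts from \cite{arcara-bertram}, \cite{maciocia}, and \cite{7-author}, so there is no internal argument to compare against; your proposal supplies the standard proofs and is correct on parts (1), (2), and the general formulas in (3). For (1), the long exact sequence of cohomology sheaves for $0 \to F \to E \to G \to 0$ in the tilted heart is exactly right; for (2), the large-volume-limit comparison is the standard argument; and your identities $\Im Z_{s,t} = A^2\ch_0\,(\mu_{A,P}-s)$ and $\Re Z_{s,t} = A^2\ch_0\bigl(\tfrac{t^2}{2}+\Delta_{A,P}-\tfrac{1}{2}(\mu_{A,P}-s)^2\bigr)$ do reduce the wall equation to a circle with precisely the displayed $s_0$ and $\rho^2$, with the symmetry of $\rho^2$ in $E$ and $F$ following from the formula for $s_0$ as you indicate.

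The one step that does not close as written is the final specialization. Your equal-rank reduction $s_0=\bigl(\ch_2^{P}(E)-\ch_2^{P}(I_Z)\bigr)/\bigl(A^2(\mu_{A,P}(E)-\mu_{A,P}(I_Z))\bigr)$ is correct, but substituting $\ch^P(L\otimes I_{Z'})=\bigl(1,\,L-P,\,\tfrac{L^2}{2}-m-L\cdot P+\tfrac{P^2}{2}\bigr)$ and $\ch^P(I_Z)=\bigl(1,\,-P,\,-n+\tfrac{P^2}{2}\bigr)$ gives
\[
s_0=\frac{n-m+\frac{L^2}{2}-L\cdot P}{L\cdot A},
\]
which is not literally the displayed $\frac{n-m+\frac{L^2}{2}-\frac{L\cdot P}{2}}{-L\cdot A}$: the denominator should be $L\cdot A$ (this is the version the paper actually uses later, e.g.\ in computing $s_W=\frac{n+\frac{F^2}{2}}{-F\cdot A_1}=-1$ for $L=\O_X(-F)$), and the coefficient of $L\cdot P$ should be $1$ rather than $\tfrac{1}{2}$. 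So you should not claim the simplification ``obtains the stated value''; carried out honestly, it exposes a typo in the statement. As a sanity check, with $P=0$, $A=H$, $L=-kH$ on $\P^2$ your formula recovers the familiar center $-\frac{n}{k}-\frac{k}{2}$. The discrepancy is invisible in the paper's main application, where $L$ is proportional to $P=-F$ and hence $L\cdot P=F^2=0$, but it does affect the subsidiary wall computations (e.g.\ $L=\O_X(-E_i)$), so it is worth flagging explicitly rather than absorbing into ``simplify.''
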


Of particular interest to us is the case $\vec{v}=(1,0,-n)$ for $n \in \Z_{>0}$. In this case, the Gieseker moduli space is isomorphic to the $X^{[n]}$ via the map sending an ideal sheaf $I_Z$ to the scheme $Z$. The largest wall in the $(A,P)$-slice for $\vec{v}$ is called the \emph{Gieseker wall}, and determining it precisely is the central computation in the proof of Theorem \ref{thm:main_thm}.

\subsection{The Weyl action} Let $X$ be a general rational elliptic surface. We identify $X$ with the blow-up $\pi:X \to \P^2$ of $\P^2$ at the nine base points of a general cubic pencil. The $\pi$-exceptional divisors $E_1, \dots, E_9$ on $X$ are disjoint sections of the elliptic fibration $X \to \P^1$. The Picard group of $X$ is given by
\[
\pic(X) \cong \Z H \oplus \Z E_1 \oplus \cdots \Z E_9,
\]
where $H$ is the pullback of the hyperplane class via $\pi$. The following intersection numbers are standard:
\[
H^2 = 1, \qquad H \cdot E_i = 0 \text{ for all }i, \qquad E_i^2 = -1, \qquad E_i \cdot E_j = 0 \text{ for } i \neq j.
\]
The canonical class of $X$ is $K_X = -3H + E_1 + \cdots + E_9$. We denote by $F$ the class of an elliptic fiber and note that $F = -K_X$. In particular, $K_X$ is antieffective.

\begin{prop}[\protect{\cite{looijenga}}]
The cone of curves $\moricone(X)$ is generated by the fiber class $F$ together with the classes of all $(-1)$-curves.
\end{prop}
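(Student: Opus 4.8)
The plan is to apply Mori's Cone Theorem, exploiting that $-K_X = F$ is nef. First I would record that $F$ is nef: the pencil of cubics becomes base-point free after blowing up its nine base points, so $|F|$ defines the elliptic fibration $X \to \P^1$ and $F^2 = 0$. Consequently $F \cdot \alpha \geq 0$ for every $\alpha \in \closedmoricone(X)$, so the whole cone is $K_X$-nonpositive and the Cone Theorem reads
\[
\closedmoricone(X) = \left(\closedmoricone(X) \cap K_X^\perp\right) + \sum_j \R_{\geq 0}[C_j],
\]
where the $C_j$ are rational curves with $0 < -K_X \cdot C_j \leq 3$ spanning the $K_X$-negative extremal rays. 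The $K_X$-trivial face is easy to pin down: an effective class $\alpha$ with $F \cdot \alpha = 0$ must be supported on curves contained in fibers, and since a general rational elliptic surface has only irreducible (nodal) fibers, $\alpha$ is a multiple of $F$. Hence $\closedmoricone(X) \cap K_X^\perp = \R_{\geq 0} F$.

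The inclusion $\supseteq$ is immediate: each $(-1)$-curve $E$ has $E^2 = -1 < 0$ and therefore spans an extremal ray, while $F$ spans the $K_X$-trivial ray just identified. For the reverse inclusion it remains to show that every $K_X$-negative extremal ray is spanned by a $(-1)$-curve. Let $C = C_j$ be one of the rational curves above, so $g(C) = 0$ and $1 \leq F \cdot C \leq 3$; adjunction gives $C^2 = C \cdot (-K_X) - 2 = F \cdot C - 2 \in \{-1, 0, 1\}$. When $F \cdot C = 1$ we get $C^2 = -1$, i.e.\ $C$ is a $(-1)$-curve, which is exactly what we want, so the work is in excluding the other two values.

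If $C^2 = 1$ (so $F \cdot C = 3$), then $C$ is an irreducible curve of positive self-intersection, hence nef and big; an extremal ray is supported by a nonzero nef class $H$ with $H \cdot C = 0$, but the Hodge index theorem forces $H^2 < 0$, contradicting $H$ nef. So this case cannot produce an extremal ray. The genuinely delicate case---and the main obstacle---is $C^2 = 0$ (so $F \cdot C = 2$): here $C$ is again nef, and since $g(C) = 0$ the linear system $|C|$ induces a ruling $f : X \to \P^1$ with rational fibers. The plan is to argue that this ruling must have a reducible fiber, so that $[C]$ decomposes and cannot be extremal. Concretely, a relatively minimal model of $f$ is a Hirzebruch surface with Picard rank $2$, and since $\rho(X) = 10$ the morphism to it is a composition of eight blow-ups; each blow-up of a point on a fiber creates a reducible fiber, whose components have negative self-intersection. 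Writing such a reducible fiber as $C = \sum_i C_i$ exhibits $[C]$ as a sum of at least two classes not proportional to $[C]$, so $[C]$ lies in the interior of a higher-dimensional face and is not extremal.

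This rules out $C^2 = 0$ and completes the identification of the $K_X$-negative extremal rays with the $(-1)$-curves. Combining with the description of the $K_X$-trivial face yields the claimed generation of $\moricone(X)$ by $F$ and the $(-1)$-curves; note that the $(-1)$-curve rays, being locally finite only away from $K_X^\perp$, accumulate precisely at the ray $\R_{\geq 0}F$, which explains both why there are infinitely many of them and why $F$ must appear as a separate generator.
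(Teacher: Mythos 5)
Your proof is correct in its overall architecture, and it is worth noting that the paper offers no argument at all for this proposition---it is quoted directly from Looijenga, whose treatment of anticanonical rational surfaces goes through the root lattice and Weyl group rather than the minimal model program. Your Cone Theorem route is therefore a genuinely different, self-contained alternative, and it has the virtue of isolating exactly where generality of $X$ is used: all fibers of a general cubic pencil are irreducible, so the $K_X$-trivial face collapses to the single ray $\R_{\geq 0}F$ (for special $X$ the components of reducible fibers would appear as extra generators). Three small points deserve tightening. First, the Cone Theorem produces rational curves in the sense of geometric genus zero, whereas your adjunction formula $C^2 = F\cdot C - 2$ presumes arithmetic genus zero; the omitted cases $g_a(C)\geq 1$ satisfy $C^2 = 2g_a(C)-2+F\cdot C > 0$ and are eliminated by the very same Hodge-index (interior of the positive cone) argument you use for $C^2=1$, so it is cleaner to organize the cases by the sign of $C^2$ and observe that $C^2<0$ forces $g_a=0$, $F\cdot C=1$. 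Second, identifying $\closedmoricone(X)\cap K_X^\perp$ with $\R_{\geq 0}F$ needs slightly more than your statement about effective classes, since the face consists of limits of effective classes; the standard repair is that $F^\perp$ is negative semidefinite with radical $\R F$, so any extremal class of this face not proportional to $F$ would have negative square and hence be represented by an irreducible curve orthogonal to $F$, i.e., a fiber component, whose class is $F$ for general $X$---a contradiction. Third, in the $C^2=0$ case you should record that $h^0(\O_X(C))\geq 2$ (Riemann--Roch together with $h^2(\O_X(C))=0$) so that $|C|$ genuinely induces the ruling; granting that, your reducible-fiber argument from $\rho(X)=10$ correctly shows $[C]$ decomposes and is not extremal.
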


It is well-known that there are infinitely many $(-1)$-curves on $X$, but they fall into a single orbit of the action of the Weyl group as we explain below.

\begin{definition}
The \emph{root lattice} on $X$ is the orthogonal complement of $F$ in $N^1(X)$ with respect to the intersection pairing. It has an integral basis
\[
B = \{ E_1 - E_2, E_2-E_3, \dots, E_8-E_9, H-E_1-E_2-E_3\}.
\]
\end{definition}

Given an element $\beta \in B$, there is an automorphism $s_\beta:N^1(X) \to N^1(X)$ defined by
\[
s_\beta(D) = D + (D \cdot \beta)D.
\]
The group generated by all reflections for $\beta \in B$ is called the \emph{Weyl group of $X$}. Note that the action of the Weyl group preserves intersection numbers: since $\beta^2 = -2$ for all $\beta \in B$, we have
\[
s_\beta(D) \cdot s_\beta(D') = D \cdot D' + \beta^2(D \cdot \beta)(D' \cdot \beta) + 2(D\cdot \beta)(D' \cdot \beta) = D \cdot D'.
\]

It is also clear that the Weyl group fixes the fiber class $F$.

\begin{lemma}[\protect{\cite{looijenga}\cite{7-author}}]\label{lemma:orbits}
Let $X$ be a general rational elliptic surface. Then
\begin{enumerate}
	\item The Weyl group of $X$ acts transitively on the set of $(-1)$-curves in $\moricone(X)$.
	\item The Weyl group action on the set of extremal rays of $\nef(X)$ has three orbits, represented by the classes $F$, $H$, and $H-E_1$.
\end{enumerate}
\end{lemma}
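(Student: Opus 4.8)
The plan is to treat the two parts separately, establishing (1) first since (2) depends on it. For (1) I would begin with the numerical characterization of the $(-1)$-curves: by adjunction a smooth rational curve $C$ with $C^2=-1$ satisfies $C\cdot K_X=-1$, i.e. $C\cdot F=1$. Writing $C=dH-\sum_i m_iE_i$, the two conditions become $\sum_i m_i=3d-1$ and $\sum_i m_i^2=d^2+1$, and an elementary Cauchy--Schwarz estimate shows they already force $d=C\cdot H\ge 0$. Since the general rational elliptic surface carries no $(-2)$-curves, Riemann--Roch together with $(K_X-C)\cdot F=-1<0$ gives $h^0(C)\ge\chi(C)=1$, so every class satisfying the two conditions is effective and is in fact an irreducible $(-1)$-curve. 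As $W$ fixes $F$ and preserves the intersection form, it permutes this set of classes. To prove transitivity I would run a descent: the subgroup generated by the $s_{E_i-E_{i+1}}$ permutes the $m_i$, so I may order $m_1\ge\cdots\ge m_9\ge 0$, and then $s_\beta$ for $\beta=H-E_1-E_2-E_3$ sends $d$ to $2d-m_1-m_2-m_3$ while staying within the set of $(-1)$-classes (hence keeping $d\ge 0$). The engine of the descent is a monovariant that strictly decreases $d$ until one reaches $d=0$, where the constraints force $C=E_i$, after which a permutation carries $E_i$ to $E_9$.

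For (2) the first point is that $W$ preserves $\nef(X)$. By Looijenga's description $\closedmoricone(X)$ is generated by $F$ and the classes of $(-1)$-curves; part (1) shows $W$ fixes the former generator and permutes the latter, so $W$ preserves $\closedmoricone(X)$ and, acting by isometries, preserves the dual cone $\nef(X)$ and permutes its extremal rays. I would next record that $F$, $H$, $H-E_1$ are nef and extremal. A short Hodge-index argument shows that any isotropic nef class spans an extremal ray: if $D^2=0$ and $D=D_1+D_2$ with $D_i$ nef, then $0=D^2\ge 2D_1\cdot D_2\ge 0$ forces $D_1,D_2$ isotropic and mutually orthogonal, hence parallel to $D$ in the Lorentzian lattice $N^1(X)$. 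This covers $F$ and $H-E_1$; for $H$, extremality follows because $H^\perp\cap\closedmoricone(X)$ contains the facet spanned by the nine independent classes $E_1,\dots,E_9$.

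To see that these are the only orbits I would stratify an extremal ray $\R_{\ge 0}D$ by $D^2$, using that a nef class has $D^2\ge 0$. If $D^2>0$ then $D$ is interior to the positive cone, so by the Hodge index theorem $D^\perp$ is negative definite, and extremality forces the $(-1)$-curves orthogonal to $D$ to span $D^\perp$; a configuration of nine pairwise non-negatively meeting $(-1)$-vectors in a negative definite rank-nine lattice must be pairwise disjoint (any positive off-diagonal entry yields a non-negative $2\times 2$ principal minor), hence is a simultaneously contractible system that $W$ carries to $\{E_1,\dots,E_9\}$, identifying $D$ with $H$ up to scale. If $D^2=0$ then $D$ lies on the null cone: either $D\cdot E>0$ for every $(-1)$-curve, in which case the accumulation of the $(-1)$-curve rays at $\R_{\ge 0}F$ forces $D\cdot F=0$ and hence $D\in\R_{\ge 0}F$ by the Lorentzian parallelism above; or $D\cdot E_0=0$ for some $(-1)$-curve $E_0$, and after using (1) to normalize $E_0$ one reduces $D$ to the class $H-E_1$.

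The two places where the real work lies are the following. In (1) the descent is genuinely subtle for the rational elliptic surface: unlike the del Pezzo case, where $\sum_i m_i^2=d^2+1$ with at most eight multiplicities bounds $d$, here there are nine multiplicities, the degree is unbounded, and the relevant root system is the affine $\widetilde{E}_8$; the monovariant must therefore exploit the affine structure (equivalently, the action of the Mordell--Weil translations) rather than a naive degree bound. In (2) the obstacle is that $\closedmoricone(X)$ is not rational polyhedral --- its $(-1)$-curve rays accumulate at $F$ --- so $\nef(X)$ is likewise non-polyhedral, and the delicate step is the analysis of the isotropic (round) part of its boundary: one must show that this accumulation leaves $\R_{\ge 0}F$ as the unique isotropic nef ray meeting no wall, that every other isotropic nef ray is $W$-equivalent to $H-E_1$, and that no further orbit of corner rays with $D^2>0$ can occur.
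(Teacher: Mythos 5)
Your route is genuinely different from the paper's: the paper simply cites Looijenga for (1), and for (2) it observes that an extremal nef $D$ is orthogonal either to $F$ (whence $D \parallel F$) or to some $(-1)$-curve $E$, normalizes $E$ to $E_9$ by (1), blows down, and quotes \cite[Prop.\ 5.2]{7-author} for the degree-one del Pezzo surface. You instead try to prove everything from first principles, and while the architecture is reasonable, two steps are genuinely missing. The most serious is the engine of the descent in (1). You correctly observe that the Cauchy--Schwarz bound that closes the del Pezzo argument degenerates with nine multiplicities ($(3d-1)^2\le 9(d^2+1)$ is vacuous), but you then leave the monovariant unspecified, saying only that it ``must exploit the affine structure.'' As written this is not a proof: the whole content of (1) is the Noether-type inequality $m_1+m_2+m_3>d$ for an ordered $(-1)$-class of degree $d\ge 1$, and that inequality, while true, requires a finer argument (e.g.\ showing that $m_1+m_2+m_3=d$ together with the ordering forces $m_1=\cdots=m_6=d/3$ and then contradicting $\sum m_i^2=d^2+1$ on the last three entries). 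Until that inequality is supplied, the descent could stall and (1) is unproved. An alternative, closer in spirit to what the paper later uses, is to invoke the simply transitive action of the Mordell--Weil translations (which lie in the Weyl group, per Grassi--Morrison) on the set of sections, which are exactly the $(-1)$-curves.

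In (2) there are two smaller gaps. First, in the $D^2>0$ case you need not only that the nine pairwise disjoint $(-1)$-curves in $D^\perp$ are simultaneously contractible, but that the Weyl group carries this exceptional configuration to $E_1,\dots,E_9$; this is a statement about markings/geometric bases of the same strength as Looijenga's results and should not be taken for granted. Second, the case $D^2=0$, $D\cdot E_0=0$ is dispatched with ``one reduces $D$ to the class $H-E_1$,'' which is exactly the point at issue: after normalizing $E_0=E_9$ and descending to the degree-one del Pezzo surface $Y$, you still must classify the isotropic extremal nef classes of $Y$ up to its Weyl group. The paper outsources precisely this to \cite[Prop.\ 5.2]{7-author}; if you want a self-contained argument you need to supply the corresponding (finite, polyhedral) classification on $Y$. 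The remaining ingredients --- effectivity of numerical $(-1)$-classes via Riemann--Roch on the general $X$, the negative-definiteness argument ruling out positive off-diagonal intersections, and the observation that the only accumulation ray of $\moricone(X)$ is $\R_{\ge 0}F$ --- are fine.
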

\begin{proof}
Statement (1) is in \cite{looijenga}. For statement (2), let $D$ be an extremal nef divisor. Then $D$ is orthogonal to a face of $\moricone(X)$, so either $D \cdot F = 0$ or $D \cdot E = 0$ for some $(-1)$-curve $E$. If $D \cdot F = 0$, then $D$ is parallel to the fiber class, hence is in the orbit of $F$.

On the other hand suppose $D \cdot E = 0$ for some $(-1)$-curve $E$. By (1) there is an element of the Weyl group taking $E$ to $E_9$. Let $X \to Y$ be the blow-down of $E_9$. Then $Y$ is the degree 1 del Pezzo surface isomorphic to the blow-up of $\P^2$ at 8 general points and $D = \pi^\ast D'$ for an extremal nef divisor $D'$ on $Y$. By \cite[Proposition 5.2]{7-author}, $D'$ is either in the orbit of $H$ or $H-E_1$ under the action of the Weyl group for $Y$.
\end{proof}

\section{The nef cone} Let $X$ be a general rational elliptic surface. The calculation of $\nef(X^{[n]})$ follows the method of \cite{7-author} and proceeds in roughly two steps. First, we bound $\nef(X^{[n]})$ by describing an \emph{a priori} larger cone $\Lambda$ that must contain it. Second, we show that every ray of $\Lambda$ is nef. To demonstrate their nefness, we exhibit them as Bayer-Macr\`i divisors. This relies on a choice of polarization and twisting divisor.

Let $\Lambda\subset N^1(X^{[n]})$ be the cone spanned by all divisor classes that have nonnegative intersection with $F_{[n]}$, $C_0$, and $E_{[n]}$ for all $(-1)$-curves $E$. 

\begin{lemma}
The cone $\Lambda$ is contained in the cone spanned by $\nef(X) \subset \nef(X^{[n]})$ and the class
\[
(n-1)F^{[n]} - \frac{1}{2}B.
\]
\end{lemma}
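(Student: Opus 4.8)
The plan is to exploit the decomposition $\pic(X^{[n]}) \cong \pic(X) \oplus \Z \tfrac{B}{2}$ from the preliminaries: working in $N^1(X^{[n]})_{\R}$, I would write an arbitrary class of $\Lambda$ as $D = (D')^{[n]} - \tfrac{b}{2}B$ for a unique $D' \in N^1(X)_{\R}$ and $b \in \R$. The whole argument then reduces to reading off the three defining inequalities of $\Lambda$ from the intersection table and checking a single nefness statement on $X$.

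First I would translate the constraints. Since $C_0 \cdot (D')^{[n]} = 0$ and $C_0 \cdot B = -2$, the condition $D \cdot C_0 \ge 0$ says exactly $b \ge 0$. The fiber $F$ is a smooth elliptic curve of genus $1$, so $F_{[n]} \cdot B = 2\cdot 1 - 2 + 2n = 2n$, and $D \cdot F_{[n]} \ge 0$ becomes $F \cdot D' \ge bn$. Each $(-1)$-curve $E$ is smooth rational, so $E_{[n]} \cdot B = 2\cdot 0 - 2 + 2n = 2(n-1)$, and $D \cdot E_{[n]} \ge 0$ becomes $E \cdot D' \ge b(n-1)$ for every $(-1)$-curve $E$.

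Next I would perform the shear suggested by matching $B$-coefficients with the target ray $T := (n-1)F^{[n]} - \tfrac{1}{2}B$. Setting $N' := D' - b(n-1)F$ gives
\[
D = (N')^{[n]} + b\!\left((n-1)F^{[n]} - \tfrac{1}{2}B\right) = (N')^{[n]} + bT,
\]
and since $b \ge 0$ this displays $D$ as a nonnegative combination of $(N')^{[n]}$ and $T$. It therefore remains only to prove that $N'$ is nef on $X$, for then $(N')^{[n]}$ lies in the embedded copy of $\nef(X) \subset \nef(X^{[n]})$ and we are done.

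For nefness I would test $N'$ against the generators of $\moricone(X)$, which by Looijenga's proposition are the fiber $F$ together with all $(-1)$-curves. Using $F^2 = 0$ we get $N' \cdot F = F \cdot D' \ge bn \ge 0$. For a $(-1)$-curve $E$, adjunction together with $F = -K_X$ gives $F \cdot E = -K_X \cdot E = 1$, so $N' \cdot E = E \cdot D' - b(n-1) \ge 0$ by the third constraint. Hence $N'$ is nef and the containment follows. The only genuinely delicate point is pinning down the shear coefficient $(n-1)$---forced by the genus-$0$ computation $E_{[n]} \cdot B = 2(n-1)$---and recognizing that, after this shear, the $(-1)$-curve inequalities defining $\Lambda$ turn into precisely the nefness inequalities on $X$; everything else is bookkeeping with the intersection table.
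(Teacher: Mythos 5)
Your proof is correct and takes essentially the same route as the paper's: both decompose $D$ via the splitting $\pic(X^{[n]}) \cong \pic(X)\oplus \Z\frac{B}{2}$, observe that the $B$-coefficient is nonnegative because of the $C_0$ constraint, and then verify that the sheared class $D'-b(n-1)F$ is nef on $X$ by testing against $F$ and the $(-1)$-curves exactly as you do. If anything, your version is marginally more explicit than the paper's (which normalizes the $B$-coefficient to $\frac{1}{2}$ and asserts the nefness of the projection $C$ without spelling out that it follows from the defining inequalities of $\Lambda$), but the underlying argument is identical.
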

\begin{proof}
Let $D\in \Lambda$. Write $D = C^{[n]} - a B$ where $C \in \nef(X)$. If $a =0$, then $D \in \nef(X)$, so we assume $a > 0$. Scaling by $\Q$, we may assume $a = \frac{1}{2}.$

We show that the class $C-(n-1)F$ is nef on $X$. It will then follow that
\[
D = \left[C^{[n]} - (n-1)F^{[n]}\right] + \left[(n-1)F^{[n]} - \frac{1}{2}B\right]
\] exhibits $D$ in the desired cone. We have
\[
(C - (n-1)F) \cdot F = C \cdot F \geq 0
\]
because $C$ is nef. If $E$ is a $(-1)$-curve, then
\[
(C - (n-1)F)\cdot E = C \cdot E - (n-1).
\]
Since $0 \leq D \cdot E_{[n]} = C \cdot E - (n-1)$, it follows that $(C-(n-1)F)\cdot E \geq 0$.

On the other hand, by adjunction we have $F \cdot E = 1$ for all $(-1)$-curves $E$, and therefore that
\[
\left((n-1)F^{[n]} - \frac{1}{2}B\right) \cdot E_{[n]}=0,
\]
and
\[
\left((n-1)F^{[n]} - \frac{1}{2}B\right) \cdot C_0=1,
\]
proving the claim.
\end{proof}
Note that $((n-1)F^{[n]} - \frac{1}{2}B) \cdot F_{[n]} = -n$ and so $(n-1)F^{[n]} - \frac{1}{2}B$ is not nef. If $D \in \Lambda$ spans an extremal ray, then there is an extremal ray $C$ of $\nef(X)$ such that $D$ is the unique $F_{[n]}$-orthogonal ray in the plane spanned by $(n-1)F^{[n]} - \frac{1}{2}B$ and $C^{[n]}$.

\begin{notation}
For $C \in \nef(X)$, we will write $\varepsilon(C)\in N^1(X^{[n]})$ to denote the unique $F^{[n]}$-orthogonal ray constructed above.
\end{notation}

By Lemma \ref{lemma:orbits}, the extremal ray $C$ of $\nef(X)$ is in the Weyl orbit of $F$, $H$, or $H-E_1$. If $C$ is in the orbit of $F$, then $C=F$ and $D = F^{[n]}$ which is certainly nef. Thus up to the Weyl action we may assume $C= H$ or $C=H-E_1$. Computing the $F_{[n]}$-orthogonal divisor described above, we have two cases:
\begin{align*}
\varepsilon(H) &= (n-1)F^{[n]}+\frac{n}{3}H^{[n]}-\frac{1}{2}B \\
\varepsilon(H-E_1) &= (n-1)F^{[n]}+\frac{n}{2}\left(H^{[n]}-E_1^{[n]}\right)-\frac{1}{2}B.
\end{align*}

To prove the theorem it suffices to show that the above classes are nef because it will follow that $\Lambda \subset \nef(X^{[n]})$ (and the other containment is obvious). In order to do so, we invoke the following theorem from \cite{7-author}.

\begin{thm*}[\protect{\cite[Proposition 3.8]{7-author}}]
Let $X$ be a smooth projective surface with irregularity zero, $A$ an ample divisor, and $P$ an antieffective $\Q$-divisor. Let $\sigma$ be a stability condition lying on a numerical wall with center $s_W$ in the Gieseker chamber in the $(A,P)$-slice corresponding to the Chern character $\vec{v} = (1,0,-n)$. Then the divisor
\[
\frac{1}{2}K_X^{[n]}-s_WA^{[n]}-P^{[n]}-\frac{1}{2}B
\]
is nef on $X^{[n]}$.
\end{thm*}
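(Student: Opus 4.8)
The plan is to derive the statement from the Bayer--Macr\`i positivity lemma \cite{bayer-macri}, applied to the universal family on $X^{[n]}$. First I would fix the moduli interpretation. Because the numerical wall with center $s_W$ lies in the Gieseker chamber for $\vec{v} = (1,0,-n)$, no ideal sheaf $I_Z$ of a length $n$ scheme is destabilized at $\sigma$, so each such $I_Z$ is $\sigma$-semistable. The universal ideal sheaf $\mathcal{I}$ on $X^{[n]} \times X$ is then a flat family of $\sigma$-semistable objects of class $\vec{v}$, inducing a morphism $X^{[n]} \to M_\sigma(\vec{v})$ to the Bridgeland moduli space. This is exactly the data the positivity lemma requires.

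Second, I would invoke the lemma. Writing $\Phi_{\mathcal{I}}$ for the Fourier--Mukai functor with kernel $\mathcal{I}$, it produces a class $\ell_\sigma \in N^1(X^{[n]})$ determined on a projective integral curve $C \subset X^{[n]}$ by
\[
\ell_\sigma \cdot C = \Im\!\left(-\frac{Z_{s_W,t}\bigl(\Phi_{\mathcal{I}}(\O_C)\bigr)}{Z_{s_W,t}(\vec{v})}\right),
\]
and guarantees that $\ell_\sigma$ is nef, with $\ell_\sigma \cdot C = 0$ exactly when the objects parameterized by a general point of $C$ are $S$-equivalent with respect to $\sigma$. Nefness of $\ell_\sigma$ is thus automatic; the substance of the theorem is to identify $\ell_\sigma$ with the stated divisor class.

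Third, I would compute $\ell_\sigma$ in coordinates and match it with $\tfrac{1}{2}K_X^{[n]} - s_W A^{[n]} - P^{[n]} - \tfrac{1}{2}B$. Both classes lie in $N^1(X^{[n]}) \cong \pic(X) \oplus \Z\tfrac{B}{2}$, so it suffices to test them against a spanning set of curve classes; the contracted class $C_0$ together with the curves $C_{[n]}$ coming from $C \subset X$ with a $g^1_n$ will do, and their pairings with $A^{[n]}$, $P^{[n]}$, $K_X^{[n]}$, and $B$ are given by the intersection numbers recorded above. For each test curve one evaluates $\Phi_{\mathcal{I}}(\O_C)$, reads off its twisted Chern character, and substitutes into $Z_{s_W,t}$; a Grothendieck--Riemann--Roch computation then rewrites the imaginary-part expression as intersection numbers on $X$. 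The half-canonical term $\tfrac{1}{2}K_X^{[n]}$ appears from the Todd class in this computation, while $-s_W A^{[n]} - P^{[n]}$ reflects the twist $P + s_W A$ built into the central charge.

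The main obstacle is this final bookkeeping: one must push the twisted Chern character correctly through $\Phi_{\mathcal{I}}$ and check that, on the wall, the height parameter $t$ enters $\ell_\sigma$ only as an overall positive scalar, so that the nef ray is pinned down by the center $s_W$ alone and agrees with the claimed class. A secondary point, needed before the lemma can even be applied, is to confirm that every $I_Z$ remains semistable on the numerical wall so that the classifying morphism $X^{[n]} \to M_\sigma(\vec{v})$ exists; this is where the hypothesis that the wall lies in the Gieseker chamber is used.
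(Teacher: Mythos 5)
The paper does not prove this statement; it is imported verbatim as \cite[Proposition 3.8]{7-author} and used as a black box, and its proof there is exactly the one you outline: the Gieseker-chamber hypothesis makes every ideal sheaf $I_Z$ $\sigma$-semistable, so the Bayer--Macr\`i positivity lemma \cite{bayer-macri} applied to the universal ideal sheaf yields a nef class $\ell_\sigma$, and a Grothendieck--Riemann--Roch computation identifies $\ell_\sigma$, up to positive scale, with $\tfrac{1}{2}K_X^{[n]} - s_W A^{[n]} - P^{[n]} - \tfrac{1}{2}B$. Your sketch therefore takes essentially the same route as the cited source; the one step you defer --- verifying that the ray of $\ell_{s,t}$ stays fixed as $(s,t)$ traverses the semicircular wall (note that both $s$ and $t$ vary there, not just $t$), so that the class depends only on the center $s_W$ --- is precisely the content of the coordinate computation in \cite{7-author}, and you have correctly flagged it as the crux.
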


In proving Theorem \ref{thm:main_thm} we will always choose $P=-F$. We will work with two polarizations:
\begin{align*}
A_1 &= \frac{n}{3}H + \left(n - \frac{3}{2}\right) F\\
A_2 &= \frac{n}{2}(H-E_1) + \left(n - \frac{3}{2}\right) F.
\end{align*}

\begin{lemma}
The classes $A_1$ and $A_2$ are ample on $X^{[n]}$ for $n \geq 2$.
\end{lemma}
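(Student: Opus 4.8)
The plan is to prove ampleness on $X^{[n]}$ by passing through the Hilbert--Chow morphism $h\colon X^{[n]}\to X^{(n)}$, rather than by a direct Nakai--Moishezon verification on the Hilbert scheme. The first point is to identify the correct ample class: the intersection table gives $A_i^{[n]}\cdot C_0=0$, so the raw induced class $A_i^{[n]}$ is only nef and big, and the ample polarization attached to $A_i$ on $X^{[n]}$ is the corrected class $A_i^{[n]}-\tfrac{\delta}{2}B$ for a small rational $\delta>0$. I would prove ampleness of this class by isolating two independent inputs: that $A_i^{[n]}=h^\ast A_i^{(n)}$ is pulled back from an ample class $A_i^{(n)}$ on the symmetric product, and that $-\tfrac12 B$ is relatively ample for $h$. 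Kleiman's relative ampleness criterion then shows that $h^\ast A_i^{(n)}-\tfrac{\delta}{2}B$ is ample on $X^{[n]}$ once $\delta$ is small enough.

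The first input reduces to ampleness of $A_i$ on the surface $X$. Writing $\pi\colon X^n\to X^{(n)}$ for the quotient by $\mathfrak{S}_n$, the external sum $\sum_j\mathrm{pr}_j^\ast A_i$ is ample on $X^n$ as soon as $A_i$ is ample on $X$; it is $\mathfrak{S}_n$-invariant, so it descends to $A_i^{(n)}$, and ampleness descends along the finite surjection $\pi$ to show $A_i^{(n)}$ is ample, whence $h^\ast A_i^{(n)}$ is nef and big. To see that $A_i$ is ample on $X$ I would apply Nakai--Moishezon together with the description of $\moricone(X)$ as spanned by $F$ and the $(-1)$-curves. Here one uses $F=-K_X$ and adjunction, which give $F\cdot E=1$ for every $(-1)$-curve $E$; since $H$ and $H-E_1$ are nef, this yields the uniform bounds $A_1\cdot E\ge n-\tfrac32$ and $A_2\cdot E\ge n-\tfrac32$, both positive for $n\ge2$, together with $A_i\cdot F=n>0$ and $A_i^2>0$ by direct computation.

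For the second input, $h$ is a birational divisorial contraction whose relative cone of curves $\moricone(X^{[n]}/X^{(n)})$ is spanned by the contracted class $C_0$ (the relative Picard number being one). Since $(-\tfrac12 B)\cdot C_0=1>0$, the class $-\tfrac12 B$ is $h$-ample, and combining this with the ampleness of $A_i^{(n)}$ on the base completes the Kleiman argument and produces an ample class on $X^{[n]}$.

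The main obstacle is the uniform positivity of $A_i$ against the \emph{infinitely many} $(-1)$-curves on $X$. This is precisely where the hypothesis $n\ge2$ and the nefness of $H$ and $H-E_1$ do the work: adjunction fixes $F\cdot E=1$ for all of them simultaneously and the nef part contributes nonnegatively, producing a bound $A_i\cdot E\ge n-\tfrac32$ that is independent of $E$ and so avoids any enumeration of the Weyl orbit.
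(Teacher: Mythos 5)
Your proposal is mathematically sound, but you have taken a substantially longer route than the paper, prompted by reading the statement literally. The phrase ``ample on $X^{[n]}$'' is best read as a slip for ``ample on $X$'': the classes $A_1$ and $A_2$ live in $\pic(X)\otimes\Q$, and their only role in the paper is as the polarization $A$ in the Arcara--Bertram $(A,P)$-slice, where \cite[Proposition 3.8]{7-author} requires $A$ to be ample on the \emph{surface}. Accordingly, the paper's entire proof is exactly your ``first input'': positivity against the generators of $\moricone(X)$, namely $A_i \cdot F = n > 0$ and $A_i \cdot E > 0$ for every $(-1)$-curve $E$, together with $A_i^2 > 0$. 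Your version of this step is actually cleaner than the paper's: the paper splits into the cases $E = E_i$ and $E = aH - \sum b_i E_i$ and computes each (leaving $A_2$ as ``an analogous calculation''), whereas your use of the nefness of $H$ and $H - E_1$ together with $F \cdot E = -K_X \cdot E = 1$ yields the uniform bound $A_i \cdot E \geq n - \tfrac{3}{2}$ for both polarizations simultaneously, with no enumeration of the Weyl orbit. The outer layer of your argument --- descent of the invariant class to an ample class on $X^{(n)}$, relative ampleness of $-\tfrac12 B$ for the Hilbert--Chow morphism, and Kleiman's criterion producing an ample class $A_i^{[n]} - \tfrac{\delta}{2}B$ on $X^{[n]}$ --- is correct as mathematics (and your observation that $A_i^{[n]} \cdot C_0 = 0$, so that $A_i^{[n]}$ itself is only nef and big, is the right diagnosis of why the literal statement needs reinterpretation), but it proves something the paper never uses: no divisor of the form $A_i^{[n]} - \tfrac{\delta}{2}B$ appears anywhere in the sequel, and the nef divisors that matter are instead produced by the Bayer--Macr\`i construction. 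If you were to keep that outer layer, you would also need to justify your assertion that $\moricone(X^{[n]}/X^{(n)})$ is spanned by $C_0$, e.g., via Fogarty's $\rho(X^{[n]}) = \rho(X) + 1$ combined with $\rho(X^{(n)}) = \rho(X)$ for $q(X) = 0$. In short: your inner argument coincides with, and mildly streamlines, the paper's proof; the rest is correct but superfluous machinery aimed at a misreading of the statement.
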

\begin{proof}
We first consider $A_1$. Since $\moricone(X)=\overline{\moricone(X)}$ is spanned by $F$ and the $(-1)$-curves $E$, it suffices to show that $A_1$ has positive intersection with those classes. We have
\[
A_1 \cdot F = n >0.
\]
If $E$ is a $(-1)$-curve, then either $E=E_i$ for some $i$ or $E=aH - \sum_{i=1}b_iE_i$ for $a>0$, $b_i \geq 0$ for all $i$. In the first case,
\[
A_1 \cdot E_i = n- \frac{3}{2} > 0.
\]
In the second case,
\[
A_1 \cdot E = \frac{n}{3}a+n - \frac{3}{2} > 0.
\]
Finally,
\[
A_1^2 = \frac{10n^2}{9}-\frac{3n}{2} > 0.
\]
An analogous calculation shows that $A_2$ is ample.
\end{proof}

We are now ready to prove the main theorem. The remainder of the argument is to compute the Gieseker wall in the $(A_1,-F)$-slice and in the $(A_2, -F)$-slice. We will see that for $\sigma$ on this wall, we have
\[
\frac{1}{2}K_X^{[n]}-s_WA_i^{[n]}+F^{[n]}-\frac{1}{2}B =
\begin{cases}
\varepsilon(H) & \text{if } i=1,\\
\varepsilon(H-E_1) & \text{if }i=2.
\end{cases}
\]
By the theorem of \cite{7-author}, it will follow that $\varepsilon(H)$ and $\varepsilon(H-E_1)$ are nef.

\begin{proof}[Proof of Theorem \ref{thm:main_thm}]
We first compute the Gieseker wall in the $(A_1,-F)$-slice for the Chern character $\vec{v}=(1,0,-n)$. Let $Z$ be a length $n$ subscheme of $X$ consisting of distinct points lying on a single elliptic fiber. Then there is an injective map of sheaves
\[
\O_X(-F) \to I_Z.
\]
The corresponding wall $W(\O_X(-F),I_Z)$ in the $(A_1,-F)$-slice has center
\[
s_W = \frac{n + \frac{F^2}{2}}{-F \cdot A_1} = -1.
\]
We wish to show that this is the Gieseker wall, i.e., that there is no larger wall destabilizing an ideal sheaf of the same character. Since the walls for $\vec{v}$ are nested semicircles, it suffices to show that the center of any actual wall is larger than $-1$. Suppose for a contradiction that there is a map $U \to I_{Z}$ corresponding to a larger wall that destabilizes some ideal sheaf of a length $n$ scheme $Z \subset X$. We distinguish two cases: $\rk(U) =1$ and $\rk(U) > 1$.

In the first case, we may write $U = L \otimes I_{Y}$, where $L$ is a line bundle and $I_Y$ is some ideal sheaf. Then $L$ and $U$ have the same slope and the center of the wall $W(U, I_{Z})$ is given by
\[
\frac{n-\ell(Y)+\frac{L^2}{2}-\frac{L\cdot P}{2}}{L \cdot A_1},
\]
whereas the center of $W(L,I_{Z})$ is given by
\[
\frac{n+\frac{L^2}{2}-\frac{L\cdot P}{2}}{L \cdot A_1}.
\]
Evidently $W(L,I_{Z})$ is larger than $W(U,I_{Z})$, and so we may assume $U = L$ without loss of generality.
If $-L = E_i$ for some $i$, then the wall $W(L, I_Z)$ has center
\[
-\frac{n-1}{n-\frac{3}{2}} > -1,
\]
so we assume $-L$ is of the form $aH - \sum_{i=1}^9 b_i E_i$ for $a>0$ and $b_i \geq 0$ for all $i$. By \cite[Prop 3.5]{7-author} we may also assume that $L$ is antieffective and that
\[
-L \cdot A_1 < A_1 \cdot F = n.
\]
If we have furthermore that $-L \cdot F \geq 2$, then
\[
-L \cdot A_1 \geq \frac{n}{3}-L\cdot H + 2n-3>n
\]
when $n\geq 3$. We may therefore assume that $-L \cdot F \leq 1$. If $-L\cdot F = 0$ then $L$ is parallel to $F$ and we are done, so assume that $-L\cdot F = 1$. We have
\[
-L \cdot A_1 = \frac{an}{3} + \left(n-\frac{3}{2}\right).
\]
Thus if $-L \cdot A_1 < n$, we must have $a=0$ or $a=1$. If $a=0$, then $-L = E_i$ for some $i$, a case we have already considered. There are three effective classes with $a=1$:
\begin{enumerate}
	\item $-L = H$, or
	\item $-L = H-E_i$ for some $i$, or
	\item $-L = H-E_i -E_j$ for some $i \neq j$.
\end{enumerate}
Since we have assumed that $-L \cdot F = 1$, the only possibility is $-L = H-E_i-E_j$ for some $i\neq j$. In this case, the center of $W(L, I_Z)$ equals
\[
-\frac{n-1}{\frac{4}{3}n-\frac{3}{2}} > -1.
\]
Thus $-L=F$ corresponds to the largest rank 1 wall.

The second case to consider is $\rk(U) \geq 2$. By \cite[Cor. 3.2]{7-author}, the radius $\rho'$ of the wall $W(U,I_Z)$ must then satisfy
\[
(\rho')^2 \leq \frac{2nA_1^2 + (A_1 \cdot -F)^2 - A_1^2F^2}{8(A_1^2)^2}.
\]
On the other hand, the radius $\rho$ of $W(\O_X(-F),I_Z)$ satisfies
\[
\rho^2 = (-1-\mu_{A_1,-F}(I_Z))^2 - 2\Delta_{A_1,-F}(I_Z) = 1+\frac{3n}{A_1^2}.
\]
Using that $F^2 = 0$ and $A_1 \cdot F=-n$, we see that for $n\geq 3$ we have $\rho^2 > (\rho')^2$. Thus no such $U$ gives a larger actual wall and so $W(\O_X(-F),I_Z)$ is the Gieseker wall. By \cite[Prop. 3.8]{7-author}, the class
\[
\frac{1}{2}K_X^{[n]} - s_WA_1^{[n]} + F^{[n]} - \frac{1}{2}B = \varepsilon(H)
\]
is nef.

The proof of the nefness of $\varepsilon(H-E_2)$ is similar. We use the polarization $A_2$ and twisting divisor $-F$. Again, by arranging the $n$ points of $Z$ on an elliptic fiber we get a map $\O_X(-F) \to I_Z$ and a corresponding wall with center $s_W = -1$. We show again that there is no larger wall in the $(A_2, -F)$-slice by separately considering destabilizing sheaves $U$ with $\rk(U) =1$ and $\rk(U) > 1$.

As before, if $\rk(U) = 1$, then we may assume without loss of generality that $U = L$ is an antieffective line bundle. If $-L = E_i$ for some $i$, then the wall $W(L,I_Z)$ has center

\[
\begin{cases}
		\displaystyle-\frac{2}{3} & \text{if } i=1\\[10pt]
		\displaystyle-\frac{n- 1}{n-\frac{3}{2}} & \text{if } 2 \leq i \leq 9.
\end{cases}
\]

In either case, $W(L,I_Z)$ is smaller than $W(\O_X(-F), I_Z)$. We therefore assume that $-L = aH - \sum_{i=1}^9 b_iE_i$ with $a>0$ and $b_i \geq 0$ for all $i$. We may again assume furthermore by \cite[Prop. 3.5]{7-author} that
\[
-L \cdot A_2 < A_2 \cdot F = n.
\]
If $-L \cdot F \geq 2$, then
\[
-L \cdot A_2 \geq 2n-\frac{3}{2} > n,
\]
so we assume $-L \cdot F = 1$ as above. Then,
\[
-L \cdot A_2 = \frac{n}{2}(a-b_1)+n-\frac{3}{2}.
\]
Thus if $-L \cdot A_2 > n$, we must have $a=b_1$. The only effective curve with class $aH - aE_1 - \cdots$ is a union of $a$ lines through the blow-down of $E_1$. Since we have also assumed that $-L \cdot F = 1$, we have
\[
2a = 1 + \sum_{i=2}^9 b_i.
\]
But $a$ lines through a fixed point can only pass through $a$ additional points (counting multiplicity), and so
\[
a\leq \sum_{i=2}^9 b_i.
\]
Thus $a=1$ and consequently $-L = H-E_1 - E_i$ for some $i>1$. The corresponding wall has center
\[
- \frac{n-1}{n - \frac{3}{2}} > -1.
\]
We conclude that $W(\O_X(-F), I_Z)$ is the largest wall corresponding to a rank 1 destabilizing object.

If $\rk(U) \geq 2$, then we again apply \cite[Cor. 3.2]{7-author} to see that the radius of $W(U,I_Z)$ must be smaller than the radius of $W(\O_X(-F), I_Z)$, and thus $W(\O_X(-F),I_Z)$ is the Gieseker wall and $\varepsilon(H-E_1)$ is nef.
\end{proof}

\subsection{The cone conjecture}
Let $X$ be a $\Q$-factorial variety and $\Delta$ an effective $\R$-divisor on $X$. We call $(X, \Delta)$ a \emph{klt Calabi-Yau pair} if $(X,\Delta)$ is klt and $K_X + \Delta$ is numerically trivial. Let $\aut(X,\Delta)$ denote the group of automorphisms of $X$ that preserve $\Delta$.

\begin{conj*}[Kawamata-Morrison cone conjecture]\label{conj:cone}
Let $(X,\Delta)$ be a klt Calabi-Yau pair. Then the number of $\aut(X,\Delta)$-equivalence classes of faces of the cone $\nef(X) \cap \eff(X)$ corresponding to birational contractions or fiber space structures is finite and there exists a fundamental domain $\Pi$ for the action of $\aut(X,\Delta)$ on $\nef(X)\cap \eff(X)$.
\end{conj*}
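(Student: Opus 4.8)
The plan is to verify the conjecture for the pair $(X^{[n]}, \Delta)$ by reducing it, via Theorem~\ref{thm:main_thm}, to the Grassi--Morrison result \cite{grassi-morrison} for the surface $X$ itself. First I would produce the klt Calabi--Yau structure. Because the Hilbert--Chow morphism $h$ is crepant, $K_{X^{[n]}} = K_X^{[n]} = -F^{[n]}$, using $K_X = -F$. The elliptic fibration $X \to \P^1$ induces a morphism $X^{[n]} \to (\P^1)^{(n)} \cong \P^n$ under which $F^{[n]}$ is the pullback of an ample class, so $-K_{X^{[n]}} = F^{[n]}$ is semiample. Since $X^{[n]}$ is smooth, a general effective $\R$-divisor $\Delta \equiv F^{[n]}$ with coefficients less than $1$ then makes $(X^{[n]}, \Delta)$ a klt Calabi--Yau pair, so the conjecture applies.

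Next I would pin down the group action on $N^1(X^{[n]})$. Every automorphism of $X$ preserves $K_X = -F$, hence fixes the fiber class and induces an automorphism of $X^{[n]}$; for $n \geq 2$ one expects these to be all of $\aut(X^{[n]})$. Under the splitting $N^1(X^{[n]}) = N^1(X) \oplus \R\tfrac{B}{2}$ the induced action is $\phi^* \oplus \id$, since $B$ is intrinsically the exceptional divisor of $h$ and $F^{[n]}$ is fixed. Consequently the assignment $C \mapsto \varepsilon(C)$ of the Notation above is equivariant, $\varepsilon(\phi^* C) = \phi^{[n]*}\varepsilon(C)$, because $\varepsilon(C)$ is characterized by $F_{[n]}$-orthogonality inside the plane spanned by $C^{[n]}$ and the $\aut(X)$-fixed class $(n-1)F^{[n]} - \tfrac{1}{2}B$.

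Then I would transport the fundamental domain. By \cite{grassi-morrison} there is a rational polyhedral fundamental domain $\Pi$ for $\aut(X)$ acting on $\nef(X)$, whose extremal rays meet, by Lemma~\ref{lemma:orbits}, only the three orbits of $F$, $H$, and $H-E_1$. I would set $\Pi^{[n]}$ to be the cone spanned by the embedded domain $\{C^{[n]} : C \in \Pi\}$ together with the finitely many rays $\varepsilon(C)$ as $C$ runs over the extremal rays of $\Pi$; by Theorem~\ref{thm:main_thm} these rays lie on the boundary of $\nef(X^{[n]})$, so $\Pi^{[n]}$ is a rational polyhedral subcone of $\nef(X^{[n]}) \cap \eff(X^{[n]})$. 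Using the equivariance of $\varepsilon$ and the fact that the classes $E_{[n]}$ of $(-1)$-curves form a single orbit, I would check that the $\aut(X^{[n]})$-translates of $\Pi^{[n]}$ cover $\nef(X^{[n]}) \cap \eff(X^{[n]})$ and meet only along faces, so that $\Pi^{[n]}$ is a fundamental domain; finiteness of the $\aut(X^{[n]},\Delta)$-classes of faces corresponding to birational contractions or fiber space structures then follows from the finitely many faces of $\Pi^{[n]}$.

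The main obstacle is this transport step: one must show that $\Pi^{[n]}$ is genuinely a fundamental domain and not merely a rational polyhedral subcone. This needs $\varepsilon$ to match the extremal structure of $\nef(X)$ with the relevant boundary of $\nef(X^{[n]})$ compatibly with the group actions, and it needs the automorphism groups to be reconciled: establishing $\aut(X^{[n]}) = \aut(X)$ for the general rational elliptic surface and identifying the image of $\aut(X^{[n]}, \Delta)$ in $\gl(N^1(X^{[n]}))$ with the group for which Grassi--Morrison produce $\Pi$ is where the real care is required.
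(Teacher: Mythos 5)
Your proposal follows the same route as the paper's Corollary~\ref{cor:cone_conj}: embed the Grassi--Morrison fundamental domain $\Pi$ into $N^1(X^{[n]})$, cone it off with the ray $(n-1)F^{[n]}-\tfrac{1}{2}B$ (equivalently, adjoin the rays $\varepsilon(C)$ for the extremal edges $C$ of $\Pi$), intersect with $\nef(X^{[n]})$, and use the containment of $\nef(X^{[n]})$ in the cone spanned by $\nef(X)$ and $(n-1)F^{[n]}-\tfrac{1}{2}B$ to show that translates of this rational polyhedral cone cover $\nef(X^{[n]})$ via projection to the $B=0$ hyperplane. The one substantive divergence is in how you handle the group: you propose to establish $\aut(X^{[n]})=\aut(X)$ and to identify $\aut(X^{[n]},\Delta)$ with the group for which Grassi--Morrison construct $\Pi$, and you correctly flag this as the delicate point --- but you leave it at ``one expects.'' The paper dissolves this obstacle rather than confronting it: Grassi--Morrison's domain $\Pi$ is a fundamental domain already for the \emph{translation} subgroup (fiberwise addition by a section, a concrete subgroup of the Weyl group consisting of honest automorphisms of $X$), and these translations visibly induce $F^{[n]}$-preserving automorphisms of $X^{[n]}$; no identification of the full automorphism group of $X^{[n]}$ is needed, since a rational polyhedral cone whose translates under a subgroup of $\aut(X^{[n]},F^{[n]})$ cover $\nef(X^{[n]})$ suffices (the extraction of an honest fundamental domain with interiors meeting only along faces is the standard Looijenga--Totaro reduction, which also answers your worry about the translates overlapping). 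On the other side of the ledger, your care with the klt condition --- replacing $F^{[n]}$ by a general $\Delta\equiv F^{[n]}$ with coefficients less than $1$, after noting $K_{X^{[n]}}=-F^{[n]}$ is semiample via $X^{[n]}\to(\P^1)^{(n)}\cong\P^n$ --- is a point the paper glosses over entirely.
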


The cone conjecture was shown by Totaro for 2-dimensional klt Calabi-Yau pairs \cite{totaro}. Grassi-Morrison exhibited in \cite{grassi-morrison} an explicit fundamental domain when $X$ is a general rational elliptic surface. We will use the results of this section and the result of Grassi-Morrison to show that the conjecture holds for the pair $(X^{[n]},F^{[n]})$. The main observation is that enough elements of the Weyl group correspond to actual automorphisms of $X$.

Fix $E_1$ as the zero-section on each fiber of $X$. Given any other section $E$, addition by $E-E_1$ (in the group law of the fiber) gives an automorphism of each fiber that extends to an automorphism of $X$. We call such automorphisms \emph{translations}.

\begin{prop}[\protect{\cite{grassi-morrison}}]
The group of translations is contained in the Weyl group of $X$ and there is a fundamental domain for the action of the translation group on $\nef(X)$.
\end{prop}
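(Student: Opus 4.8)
The plan is to treat the two assertions separately, the first being the substantive one. Throughout write $W$ for the Weyl group of $X$, and begin by recording the affine structure. The eight roots $E_i - E_{i+1}$ form an $A_8$ chain, to the third node of which $H - E_1 - E_2 - E_3$ attaches (one checks $(H-E_1-E_2-E_3)\cdot(E_3-E_4)=1$ while the pairing with every other simple root vanishes), so the nine simple roots in $B$ constitute a Dynkin diagram of affine type $\widetilde E_8$. Hence $W \cong \widetilde W(E_8)$, and by the standard structure of affine Weyl groups $W \cong W(E_8)\ltimes T$, where the normal translation subgroup $T \cong E_8$ is realized inside the isometries of $N^1(X)$ by the Eichler transvections along the isotropic class $F$: for $v \in F^\perp$ one sets $\psi_v(x) = x + (x\cdot F)v - \big((x\cdot v) + \tfrac{1}{2}(v\cdot v)(x\cdot F)\big)F$. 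Each $\psi_v$ is an isometry fixing $F$, depends only on $v \bmod \Z F$, satisfies $\psi_v\psi_w = \psi_{v+w}$, and is a product of the reflections $s_\beta$; as $v$ runs over $F^\perp/\Z F \cong E_8$ these generate $T$.

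For the first assertion I would show that translation by a section induces precisely such a transvection. For a general $X$ the sections of $X \to \P^1$ form the Mordell-Weil group $\cong E_8$, and adjunction gives $S^2 = -1$ and $S\cdot F = 1$ for every section. Fix $E_1$ as zero section and let $\tau_P$ denote translation by a section $P$; it fixes the fibration, hence fixes $F$. I claim $\tau_P$ acts on $N^1(X)$ as $\psi_v$ with $v = P - E_1 \in F^\perp$. The decisive check is on the zero section: since $\tau_P$ carries $E_1$ to $P$, one computes $\psi_v(E_1) = E_1 + v - \big((E_1 \cdot v) + \tfrac{1}{2}(v\cdot v)\big)F$, and using $E_1^2 = P^2 = -1$ and $E_1\cdot F = 1$ the coefficient of $F$ vanishes, so $\psi_v(E_1) = E_1 + v = P$, matching $\tau_P$. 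Running the same bookkeeping on a generating set of $N^1(X)$, with the forced $F$-corrections pinned down by $S^2 = -1$ and $S\cdot F = 1$, identifies $\tau_P$ with $\psi_v$ exactly; this is Shioda's Mordell-Weil lattice computation. Thus every translation lies in $T \subseteq W$.

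For the second assertion I would argue by finite index. For a suitable effective $\Q$-divisor $\Delta \equiv F = -K_X$ preserved by every translation (for instance a half-sum $\tfrac12 F_1 + \tfrac12 F_2$ of two smooth fibers, which is klt), the pair $(X,\Delta)$ is a klt Calabi-Yau pair, so Totaro's theorem \cite{totaro} for surfaces furnishes a rational polyhedral fundamental domain $\Pi$ for the action of $\aut(X,\Delta)$ on $\nef(X)\cap\eff(X)$, an explicit such domain being that of Grassi-Morrison \cite{grassi-morrison}. Since for a general $X$ the image of $\aut(X)$ in the isometries of $N^1(X)$ contains $T$ with finite index (the quotient being generated by the inversion involution $[-1]$), the translation group has finite index in $\aut(X,\Delta)$. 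Choosing coset representatives $g_1,\dots,g_r$ of $T$, the finite union $\bigcup_i g_i\Pi$ is again rational polyhedral and is a fundamental domain for $T$ on $\nef(X)$: it covers the cone because $Tg_1 \cup\cdots\cup Tg_r$ exhausts the group, and interiors stay disjoint because $\Pi$ is a fundamental domain and the $g_i$ lie in distinct cosets.

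The main obstacle is the first assertion: pinning down the linear action of a translation on $N^1(X)$ and recognizing it as an element of $W$. The transvection formula makes the target transparent, but verifying that translation by $P$ genuinely induces $\psi_{P-E_1}$, rather than that transvection twisted by some further isometry, requires the Mordell-Weil/height-pairing dictionary, and this is where all the geometric input lives. Once the splitting $W = W(E_8)\ltimes T$ and the cone conjecture for $X$ are in hand, the second assertion is a purely formal finite-index argument.
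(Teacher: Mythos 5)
The paper does not prove this proposition; it is imported verbatim from Grassi--Morrison, so there is no internal proof to compare against. Your reconstruction is essentially the argument of the cited source (together with Looijenga's description of the $\widetilde{E}_8$ lattice structure and Shioda's Mordell--Weil theory), and I find no error in it: the affine Weyl group decomposition $W \cong W(E_8)\ltimes T$ with $T$ realized by Eichler transvections along $F$ is correct, your verification that $\psi_{P-E_1}(E_1)=P$ is right, and the finite-index descent from a fundamental domain for $\aut(X,\Delta)$ to one for $T$ is sound since for a general $X$ one has $\aut(X)\cong MW(X)\rtimes\Z/2$ with $T$ normal of index $2$. Three caveats. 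First, as you acknowledge, the real content is the identification $\tau_P^\ast=\psi_{P-E_1}$ on all of $N^1(X)$, not just on $E_1$; checking a single class does not pin down an isometry, and "running the same bookkeeping on a generating set" is where the geometric work (the Mordell--Weil height pairing) actually happens, so this step is gestured at rather than carried out. Second, your fundamental domain for $T$ is a finite union $\bigcup_i g_i\Pi$ of translates of a convex rational polyhedral cone, which need not be convex; this suffices for the literal statement of the proposition, but the paper's Corollary \ref{cor:cone_conj} uses the convex rational polyhedral domain $\Pi$ of Grassi--Morrison directly, so the stronger form is what is needed downstream (convexity can be restored by Looijenga's $\Pi_\xi=\{x: x\cdot\xi\geq x\cdot g\xi \text{ for all } g\}$ construction if desired). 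Third, invoking Totaro's 2010 theorem to establish a 1993 result is logically unobjectionable but anachronistic; Grassi--Morrison construct the domain directly from the alcove decomposition for the $E_8$ translation lattice, which is the more self-contained route and avoids any appearance of circularity with the explicit domain you also cite.
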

An automorphism $\phi$ of $X$ extends naturally to an automorphism of $X^{[n]}$ by sending an ideal sheaf $I_Z$ to its pullback $\phi^\ast(I_Z)$. In particular, the translations of $X$ induce $F^{[n]}$-preserving automorphisms of $X^{[n]}$ that we will also call \emph{translations}.

\begin{cor}\label{cor:cone_conj}
The Morrison-Kawamata cone conjecture holds for the pair $(X^{[n]},F^{[n]})$.
\end{cor}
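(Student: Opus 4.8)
The plan is to deduce the corollary from Theorem~\ref{thm:main_thm} together with the Grassi--Morrison fundamental domain on $\nef(X)$, by lifting the latter along the class $B$. First I would check that $(X^{[n]},F^{[n]})$ is a klt Calabi--Yau pair. Since $X^{[n]}$ is smooth it is $\Q$-factorial, and because the Hilbert--Chow morphism $h$ is crepant we have $K_{X^{[n]}}=(K_X)^{[n]}=-F^{[n]}$, so $K_{X^{[n]}}+F^{[n]}\equiv 0$ and $-K_{X^{[n]}}=F^{[n]}$ is effective and nef. To obtain an honest klt boundary I would replace $F^{[n]}$ by the numerically equivalent $\Q$-divisor $\Delta=\frac{1}{m}\sum_{i=1}^{m}F_i^{[n]}$ for distinct smooth fibers $F_i$ and $m\gg 0$; since the fibers are pairwise disjoint, at most $n$ of the $F_i^{[n]}$ pass through any point of $X^{[n]}$, so the coefficients of $\Delta$ are at most $n/m<1$ and $(X^{[n]},\Delta)$ is klt. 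As a translation fixes every fiber of $X\to\P^1$, it preserves each $F_i^{[n]}$ and hence $\Delta$, so the translations lie in $\aut(X^{[n]},\Delta)$.

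Next I would observe that $\nef(X^{[n]})\cap\eff(X^{[n]})$ is, up to closure, all of $\nef(X^{[n]})$: if $D$ is nef and big then $K_{X^{[n]}}+\Delta\equiv 0$ lets the basepoint-free theorem apply to show $D$ is semiample, hence effective, and the remaining nef classes lie in $\ceff(X^{[n]})$ by continuity. Thus it suffices to produce a rational polyhedral fundamental domain for $\aut(X^{[n]},\Delta)$ acting on $\nef(X^{[n]})$. Writing $N^1(X^{[n]})=N^1(X)\oplus\R\frac{B}{2}$ and $D=C^{[n]}-a\frac{B}{2}$, Theorem~\ref{thm:main_thm} dualizes to
\[
\nef(X^{[n]})=\left\{C^{[n]}-a\tfrac{B}{2}\ :\ a\ge 0,\ C\cdot R\ge a(n-1)\text{ for all extremal rays }R\text{ of }\moricone(X)\right\}.
\]
A translation acts by its Weyl element on the $N^1(X)$ summand and trivially on $\frac{B}{2}$ (the divisor $B$, being the $h$-exceptional locus, is preserved), and it fixes $F$; in particular it fixes $a=D\cdot C_0$ and permutes the rays $R$.

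Then I would lift the domain. Let $\Pi_X\subseteq\nef(X)$ be the Grassi--Morrison rational polyhedral fundamental domain for the translation group, and set
\[
\Pi=\left\{D\in\nef(X^{[n]})\ :\ \text{the }N^1(X)\text{-component of }D\text{ lies in }\Pi_X\right\}.
\]
Since automorphisms preserve $\nef(X^{[n]})$ and translations fix both $a$ and the $\frac{B}{2}$-component, every $D$ has a unique translate whose $N^1(X)$-component lies in $\Pi_X$; hence the translates of $\Pi$ cover $\nef(X^{[n]})$ with disjoint interiors, so $\Pi$ is a strict fundamental domain for the translations. Finally I would pass to the full group: for a general $X$ every automorphism of $X^{[n]}$ is induced by one of $X$, and the translations (the Mordell--Weil group) form a finite-index normal subgroup of $\aut(X)$, hence of $\aut(X^{[n]},\Delta)$; a rational polyhedral fundamental domain for such a subgroup yields one for the whole group, and its finitely many faces give the required finiteness of $\aut$-classes of faces of $\nef(X^{[n]})\cap\eff(X^{[n]})$.

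The main obstacle is showing that $\Pi$ is rational polyhedral. Its defining inequalities $C\cdot E\ge a(n-1)$ range over the infinitely many $(-1)$-curves $E$, so I must prove that only finitely many are active on $\Pi$; equivalently, that $C\mapsto\min_R C\cdot R$ is piecewise linear with finitely many pieces on the rational polyhedral cone $\Pi_X$. This is the $X^{[n]}$-analogue of the rational polyhedrality of $\nef(X)$ modulo the Weyl action, and I expect to establish it by a boundedness argument: on $\Pi_X$ the values $C\cdot E$ tend to infinity as $E$ runs through the single Weyl orbit of $(-1)$-curves, so the minimum is attained by finitely many curves (precisely those cutting out the facets of $\nef(X)$ meeting $\Pi_X$), whence $\Pi$ is cut out by finitely many rational inequalities. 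Secondary points requiring care are the klt estimate for $\Delta$ and the finite-index descent to $\aut(X^{[n]},\Delta)$.
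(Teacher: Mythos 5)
Your overall strategy is the paper's: lift the Grassi--Morrison fundamental domain $\Pi_X \subset \nef(X)$ to $\nef(X^{[n]})$ using the translation automorphisms, which act trivially on the $B$-direction and by their Weyl elements on $N^1(X)$. Your preliminary observations (kltness via $\frac{1}{m}\sum F_i^{[n]}$, which the paper silently skips; the covering property of the translates; the finite-index descent to the full automorphism group) are reasonable and in places more careful than the paper. But the step you yourself flag as ``the main obstacle'' --- rational polyhedrality of the lifted domain --- is exactly where your argument has a genuine gap, and the sketch you give does not close it. You define the lift as the full preimage $p^{-1}(\Pi_X)\cap\nef(X^{[n]})$ and must then show that, of the infinitely many inequalities $C\cdot E\ge a(n-1)$, only finitely many are active over $\Pi_X$. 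Your proposed reason --- that $C\cdot E$ tends to infinity as $E$ runs over the Weyl orbit of $(-1)$-curves --- fails on $\Pi_X$ itself: the ray spanned by $F$ lies in $\Pi_X$ (it is fixed by every translation), and $F\cdot E=1$ for \emph{every} $(-1)$-curve $E$, so near that ray the infinitely many functionals $C\mapsto C\cdot E$ are all bounded and nearly equal, and no finite subfamily can be selected on the grounds you give. (Relatedly, your dual description of $\nef(X^{[n]})$ has the wrong coefficient at $F$: since $g(F)=1$, the inequality is $C\cdot F\ge an$, not $C\cdot F\ge a(n-1)$; this is not cosmetic, because it is precisely the $F$-inequality that is binding near the ray of $F$ and rescues the finiteness.)

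The paper sidesteps this entirely by parametrizing the nef cone differently. Its first lemma of Section 3, combined with Theorem \ref{thm:main_thm}, shows
\[
\nef(X^{[n]})=\left\{C'^{[n]}+t\left((n-1)F^{[n]}-\tfrac{1}{2}B\right)\ :\ C'\in\nef(X),\ 0\le t\le \tfrac{C'\cdot F}{n}\right\},
\]
because $(n-1)F^{[n]}-\frac{1}{2}B$ is orthogonal to $E_{[n]}$ for every $(-1)$-curve $E$, so along each such segment the only constraint that can become active is the single linear condition $D\cdot F_{[n]}\ge 0$. The domain $\Psi=\nef(X^{[n]})\cap\langle\Pi_X,(n-1)F^{[n]}-\frac{1}{2}B\rangle$ is then the cone generated by $\Pi_X$ together with the classes $\varepsilon(C)$ for the finitely many extremal edges $C$ of $\Pi_X$ (using that $C\mapsto\varepsilon(C)$ is linear), hence manifestly rational polyhedral; this cone coincides with your $p^{-1}(\Pi_X)\cap\nef(X^{[n]})$, but the finiteness is free rather than an unresolved minimization over infinitely many curves. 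To repair your write-up you should either switch to this parametrization or prove directly that $\frac{C\cdot F}{n}\le\frac{C\cdot E}{n-1}$ fails for at most finitely many $E$ as $C$ ranges over $\Pi_X$ --- which, in effect, is a restatement of the paper's lemma.
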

\begin{proof}
Let $\Pi$ denote the fundamental domain for $\nef(X)$ constructed by Grassi-Morrison. We construct a domain $\Psi$ as follows. We consider $\Pi$ as a subset of $\nef(X^{[n]})$ in the usual manner. Let $\Psi$ be the intersection of $\nef(X^{[n]})$ with the cone spanned by $\Pi$ and the class $(n-1)F^{[n]}-\frac{1}{2}B$.

We have seen that $\nef(X^{[n]})$ is contained in the cone spanned by $\nef(X)$ and $(n-1)F^{[n]} - \frac{1}{2}B$. To see that $\Psi$ is rational polyhedral, note that $\Psi$ is precisely the cone spanned by $\Pi$ and the set of all $\varepsilon(C)$ for all extremal edges $C$ of $\Pi$, where $\varepsilon(C)$ is the unique $F^{[n]}$-orthogonal ray in the plane $\langle C, (n-1)F^{[n]}- \frac{1}{2}B \rangle$ constructed earlier. Since $\Pi$ is rational polyhedral, it follows that $\Psi$ is too.

Suppose $D \in \nef(X^{[n]})$. Let $p(D)$ denote its projection to the $B=0$ hyperplane. Then $p(D) \in \phi(\Pi)$ for some translation $\phi \in \aut(X^{[n]},F)$, and so $D$ is in the cone spanned by $\phi(\Pi)$ and $(n-1)F^{[n]} - \frac{1}{2}B$, and thus $D \in \phi(\Psi)$. It follows that
\[
\nef(X^{[n]}) = \bigcup_{\phi \in \aut(X^{[n]},F^{[n]})} \phi(\Psi).
\]

\end{proof}

\bibliographystyle{plain}

\end{document}